\documentclass[runningheads,a4paper]{llncs}

\usepackage{amssymb}
\setcounter{tocdepth}{3}
\usepackage{graphicx}

\usepackage{url}
\urldef{\mailsa}\path|E-mail:devsi.bantva@gmail.com|
\newcommand{\keywords}[1]{\par\addvspace\baselineskip
\noindent\keywordname\enspace\ignorespaces#1}

\newcommand{\be}{\begin{equation}}
\newcommand{\ee}{\end{equation}}
\newcommand{\bea}{\begin{eqnarray}}
\newcommand{\eea}{\end{eqnarray}}
\newcommand{\bean}{\begin{eqnarray*}}
\newcommand{\eean}{\end{eqnarray*}}
\def\non{\nonumber}

\begin{document}
\mainmatter  

\title{On hamiltonian colorings of block graphs}

\author{Devsi Bantva}
\institute{Lukhdhirji Engineering College, Morvi - 363 642 \\
Gujarat (INDIA) \\
\mailsa\\}

\toctitle{On hamiltonian colorings of block graphs}
\tocauthor{Devsi Bantva}
\maketitle

\begin{abstract}
A hamiltonian coloring $c$ of a graph $G$ of order $p$ is an assignment of colors to the vertices of $G$ such that $D(u, v)$ + $|c(u) - c(v)|$ $\geq$ $p - 1$ for every two distinct vertices $u$ and $v$ of $G$, where $D(u, v)$ denotes the detour distance between $u$ and $v$. The value \emph{hc(c)} of a hamiltonian coloring $c$ is the maximum color assigned to a vertex of $G$. The hamiltonian chromatic number, denoted by $hc(G)$, is the min\{$hc(c)$\} taken over all hamiltonian coloring $c$ of $G$. In this paper, we present a lower bound for the hamiltonian chromatic number of block graphs and give a sufficient condition to achieve the lower bound. We characterize symmetric block graphs achieving this lower bound. We present two algorithms for optimal hamiltonian coloring of symmetric block graphs.
\keywords{Hamiltonian coloring, hamiltonian chromatic number, block graph, symmetric block graph.}
\end{abstract}

\section{Introduction}

A \emph{hamiltonian coloring} $c$ of a graph $G$ of order $p$ is an assignment of colors (non-negative integers) to the vertices of $G$ such that $D(u, v)$ + $|c(u) - c(v)|$ $\geq$ $p - 1$ for every two distinct vertices $u$ and $v$ of $G$, where $D(u, v)$ denotes the detour distance which is the length of the longest path between $u$ and $v$. The value of $hc(c)$ of a hamiltonian coloring $c$ is the maximum color assigned to a vertex of $G$. The \emph{hamiltonian chromatic number} $hc(G)$ of $G$ is min\{$hc(c)$\} taken over all hamiltonian coloring $c$ of $G$. It is clear from definition that two vertices $u$ and $v$ can be assigned the same color only if $G$ contains a hamiltonian $u-v$ path. Moreover, if $G$ is a hamiltonian-connected graph then all the vertices can be assigned the same color. Thus the hamiltonian chromatic number of a connected graph $G$ measures how close $G$ is to being hamiltonian-connected, minimum the hamiltonian chromatic number of a connected graph $G$ is, the closer $G$ is to being hamiltonian-connected. The concept of hamiltonian coloring was introduced by Chartrand \emph{et al.} \cite{Chartrand1} as a variation of \emph{radio k-coloring} of graphs.

At present, the hamiltonian chromatic number is known only for handful of graph families. Chartrand \emph{et al.} investigated the exact hamiltonian chromatic numbers for complete graph $K_{n}$, cycle $C_{n}$, star $K_{1,n}$ and complete bipartite graph $K_{r,s}$ in \cite{Chartrand1,Chartrand2}. Also an upper bound for $hc(P_{n})$ was established by Chartrand \emph{et al.} in \cite{Chartrand1} but the exact value of $hc(P_{n})$ which is equal to the radio antipodal number $ac(P_{n})$ given by Khennoufa and Togni in \cite{Khennoufa}. In \cite{Shen}, Shen \emph{et al.} have discussed the hamiltonian chromatic number for graphs $G$ with max\{$D(u,v)$ : $u,v \in V(G)$, $u \neq v$\} $\leq$ $\frac{p}{2}$, where $p$ is the order of graph $G$ and they gave the hamiltonian chromatic number for a special class of caterpillars and double stars. The researchers emphasize that determining the hamiltonian chromatic number is interesting but a challenging task even for some basic graph families.

Without loss of generality, we initiate with label 0, then the \emph{span} of any hamiltonian coloring $c$ which is defined as max\{$|c(u) - c(v)|$ : $u, v \in V(G)$\}, is the maximum integer used for coloring. However, in \cite{Chartrand1,Chartrand2,Shen} only positive integers are used as colors. Therefore, the hamiltonian chromatic number defined in this article is one less than that defined in \cite{Chartrand1,Chartrand2,Shen} and hence we will make necessary adjustment when we present the results of \cite{Chartrand1,Chartrand2,Shen} in this article. Moreover, for standard graph theoretic terminology and notation we follow \cite{West}.

In this paper, we present a lower bound for the hamiltonian chromatic number of block graphs and give a sufficient condition to achieve the lower bound. As an illustration, we present symmetric block graphs (those block graphs whose all blocks are cliques of size $n$, each cut vertex is exactly in $k$ blocks and the eccentricity of end vertices is same) achieving this lower bound. We present two algorithms for optimal hamiltonian coloring of symmetric block graphs.

\section{A lower bound for hamiltonian chromatic number of block graphs}

\indent\indent A \emph{block graph} is a connected graph all of whose blocks are cliques. The detour distance between $u$ and $v$, denoted by $D(u,v)$, is the longest distance between $u$ and $v$ in $G$. The \emph{detour eccentricity} $\epsilon_{D}(v)$ of a vertex $v$ is the detour distance from $v$ to a vertex farthest from $v$. The \emph{detour center} $C_{D}(G)$ of $G$ is the subgraph of $G$ induced by the vertex/vertices of $G$ whose detour eccentricity is minimum. In \cite{Chartrand4}, Chartrand \emph{et al.} shown that the detour center $C_{D}(G)$ of every connected graph $G$ lies in a single block of $G$. The vertex/vertices of detour center $C_{D}(G)$ are called \emph{detour central vertex/vertices} for graph $G$. In a block graph $G$, if $u$ is on the $w-v$ path, where $w$ is the nearest detour central vertex for $v$, then $u$ is an \emph{ancestor} of $v$, and $v$ is a \emph{descendent} of $u$. Let $u_{i}$, $i =1,2,...,n$ are adjacent vertices of a block attached to a central vertex. Then the subgraph induced by $u_{i}$, $i=1,2,...,n$ and all its descendent is called a \emph{branch} at $w$. Two branches are called \emph{different} if they are induced by vertices of two different blocks attached to the same central vertex, and \emph{opposite} if they are induced by vertices of two different blocks attached to different central vertices. For a block graph $G$, define \emph{detour level function} $\mathcal{L}$ on $V(G)$ by

\begin{center}
$\mathcal{L}(u)$ := min\{$D(w, u)$ : $w \in V(C_{D}(G))$\}, for any $u$ $\in$ $V(G)$.
\end{center}

The \emph{total detour level of a graph} $G$, denoted by $\mathcal{L}(G)$, is defined as

\begin{equation}\label{total:l}
\mathcal{L}(G) := \displaystyle \sum_{u \in V(G)} \mathcal{L}(u).
\end{equation}

Note that if $|C_{D}(G)|$ = $\omega$ then the detour distance between any two vertices $u$ and $v$ in a block graph $G$ satisfies
\begin{equation}\label{eqn:duv}
D(u,v) \leq \mathcal{L}(u) + \mathcal{L}(v) + \omega - 1.
\end{equation}

Moreover, equality holds in (\ref{eqn:duv}) if $u$ and $v$ are in different branches when $\omega$ = 1 and in opposite branches when $\omega \geq 2$.

Define $\xi$ = min\{$|V(B_{i})|-1$:$B_{i}$ is a block attached to detour central vertex\} when $\omega$ = 1; otherwise $\xi$ = 0.

We first give a lower bound for the hamiltonian chromatic number of block graphs. A hamiltonian coloring $c$ on $V(G)$, induces an ordering of $V(G)$, which is a line up of the vertices with equal or increasing images. We denote this ordering by $V(G)$ = \{$u_{0}$, $u_{1}$, $u_{2}$, ..., $u_{p-1}$\} with

0 = $c(u_{0})$ $\leq$ $c(u_{1})$ $\leq$ $c(u_{2})$ $\leq$ ... $\leq$ $c(u_{p-1})$.

Notice that, $c$ is a hamiltonian coloring, then the span of $c$ is $c(u_{p-1})$.

\begin{theorem}\label{hc:lower} Let $G$ be a block graph of order $p$ and $\omega$, $\xi$ and $\mathcal{L}(G)$ are defined as earlier then
\begin{equation}\label{eqn:lower}
hc(G) \geq (p - 1)(p-\omega) - 2 \mathcal{L}(G) + \xi.
\end{equation}
\end{theorem}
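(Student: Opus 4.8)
The plan is to order the vertices by their colors exactly as in the statement and to telescope the defining inequalities along consecutive vertices of this ordering. For each $i$ with $0 \le i \le p-2$, applying the hamiltonian condition to the pair $u_i, u_{i+1}$ and using $c(u_{i+1}) \ge c(u_i)$ gives
\[
c(u_{i+1}) - c(u_i) \ge (p-1) - D(u_i, u_{i+1}).
\]
Summing these $p-1$ inequalities, the left-hand side telescopes to $c(u_{p-1}) - c(u_0) = hc(c)$ since $c(u_0) = 0$, so
\[
hc(c) \ge (p-1)^2 - \sum_{i=0}^{p-2} D(u_i, u_{i+1}).
\]
It therefore suffices to bound the sum of detour distances from above.

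Next I would apply the structural bound (\ref{eqn:duv}) term by term, writing $D(u_i, u_{i+1}) \le \mathcal{L}(u_i) + \mathcal{L}(u_{i+1}) + \omega - 1$ for each consecutive pair. Each interior vertex $u_1, \dots, u_{p-2}$ lies in exactly two consecutive pairs while the endpoints $u_0$ and $u_{p-1}$ lie in only one, so by (\ref{total:l}) the level terms add up to $2\mathcal{L}(G) - \mathcal{L}(u_0) - \mathcal{L}(u_{p-1})$, and the constants contribute $(p-1)(\omega-1)$. Substituting and using $(p-1)^2 - (p-1)(\omega-1) = (p-1)(p-\omega)$ yields
\[
hc(c) \ge (p-1)(p-\omega) - 2\mathcal{L}(G) + \mathcal{L}(u_0) + \mathcal{L}(u_{p-1}).
\]

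The crux is then to show $\mathcal{L}(u_0) + \mathcal{L}(u_{p-1}) \ge \xi$. If $\omega \ge 2$ this is trivial, since $\xi = 0$ and $\mathcal{L}$ is non-negative. The real work is the case $\omega = 1$, with a unique detour central vertex $w$. Here I would first establish that every non-central vertex has detour level at least $\xi$: if $u$ is adjacent to $w$ through a block $B_i$, then a longest $w$--$u$ path cannot use any branch hanging off an intermediate vertex of $B_i$, because re-entering a block would force revisiting its cut vertex; hence a longest such path is a hamiltonian path of the clique $B_i$, giving $\mathcal{L}(u) = D(w,u) = |V(B_i)| - 1 \ge \xi$, while vertices farther from $w$ have strictly larger level. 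Since $w$ is the only vertex of detour level $0$ and $u_0 \ne u_{p-1}$, at least one endpoint is non-central and contributes at least $\xi$, so $\mathcal{L}(u_0) + \mathcal{L}(u_{p-1}) \ge \xi$.

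Chaining the two displayed bounds gives $hc(c) \ge (p-1)(p-\omega) - 2\mathcal{L}(G) + \xi$ for an arbitrary hamiltonian coloring $c$; since this holds for every $c$, it holds for the minimizing one and proves (\ref{eqn:lower}). I expect the main obstacle to be precisely the $\omega = 1$ analysis: one must argue carefully that pendant branches at the interior vertices of a block cannot lengthen a longest path between the center and an adjacent vertex, as this is exactly what forces the additive correction to be the full $\xi$ rather than merely $1$.
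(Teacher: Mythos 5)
Your proof is correct and follows essentially the same route as the paper: order the vertices by color, telescope the hamiltonian condition over consecutive pairs, bound each $D(u_i,u_{i+1})$ by $\mathcal{L}(u_i)+\mathcal{L}(u_{i+1})+\omega-1$, and finish with $\mathcal{L}(u_0)+\mathcal{L}(u_{p-1})\ge\xi$. The only difference is that you actually justify that last inequality in the $\omega=1$ case (every non-central vertex has level at least $\xi$ because a longest path from the center to a neighbor is a hamiltonian path of the containing clique), whereas the paper simply asserts it.
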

\begin{proof} It suffices to prove that any hamiltonian coloring $c$ of block graph $G$ has no span less than the right hand side of (\ref{eqn:lower}). Let $c$ be an arbitrary hamiltonian coloring for $G$, where 0 = $c(u_{0})$ $\leq$ $c(u_{1})$ $\leq$ $c(u_{2})$ $\leq$ ... $\leq$ $c(u_{p-1})$. Then $c(u_{i+1}) - c(u_{i}) \geq p - 1 - D(u_{i}, u_{i+1})$, for all 0 $\leq$ $i$ $\leq$ $p-2$. Summing up these $p-1$ inequalities, we get
\begin{equation}\label{eqn:sum}
c(u_{p-1}) - c(u_{0}) \geq (p - 1)^{2} - \displaystyle\sum_{i = 0}^{p-1} D(u_{i}, u_{i+1})
\end{equation}
We consider following two cases. \\
\textbf{Case-1:} $\omega$ = 1.~~In this case, note that $\mathcal{L}(u_{0})+\mathcal{L}(u_{p-1}) \geq \xi$ and by substituting (\ref{eqn:duv}) into (\ref{eqn:sum}) we obtain,
$$
\begin{array}{ll}
c(u_{p-1}) - c(u_{0}) & \geq (p - 1)^{2} - \displaystyle\sum_{i = 0}^{p-1} D(u_{i}, u_{i+1}) \\ [0.3cm]
 & \geq (p - 1)^{2} - \displaystyle\sum_{i = 0}^{p-1} \left(\mathcal{L}(u_{i}) + \mathcal{L}(u_{i+1}) + \omega - 1 \right) \\ [0.3cm]
 & = (p - 1)^{2} - 2\displaystyle\sum_{u \in V(G)} \mathcal{L}(u) + \mathcal{L}(u_{0}) + \mathcal{L}(u_{p-1}) - (p-1)(\omega-1)  \\ [0.3cm]
 & \geq (p - 1)(p-\omega) - 2 \mathcal{L}(G) + \xi \\ [0.3cm]
\end{array}
$$
\textbf{Case-2:} $\omega \geq 2$.~~In this case, note that $\mathcal{L}(u_{0})+\mathcal{L}(u_{p-1}) \geq 0$ and by substituting (\ref{eqn:duv}) into (\ref{eqn:sum}) we obtain,
$$
\begin{array}{ll}
c(u_{p-1}) - c(u_{0}) & \geq (p - 1)^{2} - \displaystyle\sum_{i = 0}^{p-1} D(u_{i}, u_{i+1}) \\ [0.3cm]
 & \geq (p - 1)^{2} - \displaystyle\sum_{i = 0}^{p-1} \left(\mathcal{L}(u_{i}) + \mathcal{L}(u_{i+1}) + \omega - 1 \right) \\ [0.3cm]
 & = (p - 1)^{2} - 2\displaystyle\sum_{u \in V(G)} \mathcal{L}(u) + \mathcal{L}(u_{0}) + \mathcal{L}(u_{p-1}) - (p-1)(\omega-1)  \\ [0.3cm]
 & \geq (p - 1)(p-\omega) - 2 \mathcal{L}(G) \\ [0.3cm]
 & = (p - 1)(p-\omega) - 2 \mathcal{L}(G) + \xi
\end{array}
$$
Thus, any hamiltonian coloring has span not less than the right hand side of (\ref{eqn:lower}) and hence we obtain $hc(G) \geq (p - 1)(p-\omega) - 2 \mathcal{L}(G) + \xi$.
\end{proof}

The next result gives sufficient condition with optimal hamiltonian coloring for the equality in (\ref{eqn:lower}).

\begin{theorem}\label{hc:main} Let $G$ be a block graph of order $p$ and, $\omega$, $\xi$ and $\mathcal{L}(G)$ are defined as earlier then
\begin{equation}\label{eqn:main}
hc(G) = (p - 1)(p-\omega) - 2 \mathcal{L}(G) + \xi,
\end{equation}
if there exists an ordering \{$u_{0}$, $u_{1}$,...,$u_{p-1}$\} with 0 = $c(u_{0}) \leq c(u_{1}) \leq ... \leq c(u_{p-1})$ of vertices of block graph $G$ such that
\begin{enumerate}
\item $\mathcal{L}(u_{0})$ = 0, $\mathcal{L}(u_{p-1})$ = $\xi$ when $\omega$ = 1 and $\mathcal{L}(u_{0})$ = $\mathcal{L}(u_{p-1})$ = 0 when $\omega \geq 2$,
\item $u_{i}$ and $u_{i+1}$ are in different branches when $\omega = 1$ and opposite branches when $\omega \geq 2$,
\item $D(u_{i}, u_{i+1}) \leq \frac{p}{2}$, for $0 \leq i \leq p-2$.
\end{enumerate}
Moreover, under these conditions the mapping $c$ defined by
\begin{equation}\label{f0}
c(u_{0}) = 0
\end{equation}
\begin{equation}\label{f1}
c(u_{i+1}) = c(u_{i}) + p - 1 - \mathcal{L}(u_{i}) - \mathcal{L}(u_{i+1}) - \omega + 1, 0 \leq i \leq p-2
\end{equation}
is an optimal hamiltonian coloring of $G$.
\end{theorem}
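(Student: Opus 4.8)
The plan is to verify directly that the mapping $c$ defined by (\ref{f0})--(\ref{f1}) is a genuine hamiltonian coloring whose span equals the right-hand side of (\ref{eqn:main}); together with the lower bound of Theorem~\ref{hc:lower} this simultaneously forces equality in (\ref{eqn:lower}) and certifies $c$ as optimal. The work therefore divides into two parts: computing the span $c(u_{p-1})$, and checking $D(u_i,u_j)+|c(u_i)-c(u_j)| \geq p-1$ for every pair $i<j$.

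First I would compute the span by telescoping the recurrence (\ref{f1}). Since $c(u_0)=0$, summing the increments over $0 \leq i \leq p-2$ gives $c(u_{p-1}) = (p-1)(p-\omega) - \sum_{i=0}^{p-2}(\mathcal{L}(u_i)+\mathcal{L}(u_{i+1}))$. In that sum each interior level $\mathcal{L}(u_j)$, $1 \leq j \leq p-2$, is counted twice and the two endpoints once, so it equals $2\mathcal{L}(G) - \mathcal{L}(u_0) - \mathcal{L}(u_{p-1})$. Condition (1) now evaluates $\mathcal{L}(u_0)+\mathcal{L}(u_{p-1})$ as $\xi$ when $\omega=1$ and as $0=\xi$ when $\omega \geq 2$, so in both cases $c(u_{p-1}) = (p-1)(p-\omega) - 2\mathcal{L}(G) + \xi$, exactly the target value.

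Next I would check the coloring condition, starting with consecutive pairs. Condition (2) places $u_i$ and $u_{i+1}$ in different (resp. opposite) branches, so equality holds in (\ref{eqn:duv}) and $D(u_i,u_{i+1}) = \mathcal{L}(u_i)+\mathcal{L}(u_{i+1})+\omega-1$. Substituting this into (\ref{f1}) rewrites each increment as $c(u_{i+1})-c(u_i) = p-1-D(u_i,u_{i+1})$. Hence every consecutive pair meets the hamiltonian inequality with equality, and since $D(u_i,u_{i+1}) \leq p/2$ by condition (3) the colors are indeed non-decreasing, so $c$ is well defined.

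The crux --- and the step I expect to be the main obstacle --- is the non-consecutive pairs $j \geq i+2$. Writing $m=j-i$ and telescoping the clean increments gives $c(u_j)-c(u_i) = m(p-1) - \sum_{k=i}^{j-1} D(u_k,u_{k+1})$, so the hamiltonian inequality for $\{u_i,u_j\}$ is equivalent to $D(u_i,u_j) + (m-1)(p-1) \geq \sum_{k=i}^{j-1} D(u_k,u_{k+1})$. This is where condition (3) is indispensable: it bounds the right-hand side by $mp/2$, and using only $D(u_i,u_j) \geq 1$ (valid since $u_i \neq u_j$) it remains to verify $1+(m-1)(p-1) \geq mp/2$, which rearranges to $(m-2)(p-2) \geq 0$ and so holds for all $m \geq 2$, $p \geq 2$. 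This settles every pair, proving $c$ is a valid hamiltonian coloring; optimality then follows from Theorem~\ref{hc:lower}, completing the proof.
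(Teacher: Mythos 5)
Your proposal is correct and follows essentially the same route as the paper: telescope the recurrence to compute the span via condition (1), use condition (2) to get equality with $p-1-D(u_i,u_{i+1})$ for consecutive pairs, and for $j-i=m\geq 2$ combine condition (3) with $D(u_i,u_j)\geq 1$ to reduce the hamiltonian inequality to $(m-2)(p-2)\geq 0$. If anything, your write-up is slightly more careful than the paper's: you explicitly treat consecutive pairs, note that the increments are non-negative so the ordering is consistent, and avoid the paper's minor slips (it writes $(j-i)\left(\frac{p-1}{2}\right)$ where $(j-i)\left(\frac{p-2}{2}\right)$ is meant, and invokes $D(u_i,u_{i+1})\geq 1$ where $D(u_i,u_j)\geq 1$ is intended).
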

\begin{proof} Suppose (1), (2) and (3) hold for an ordering \{$u_{0}$, $u_{1}$,...,$u_{p-1}$\} of the vertices of $G$ and $c$ is defined by (\ref{f0}) and (\ref{f1}). By Theorem \ref{hc:lower}, it is enough to prove that $c$ is a hamiltonian coloring whose span is $c(u_{p-1})$ = $(p - 1)(p-\omega) - 2 \mathcal{L}(G)$ + $\xi$.

Without loss of generality assume that $j-i \geq 2$ then
$$
\begin{array}{ll}
c(u_{j}) - c(u_{i}) & = \displaystyle\sum_{t = i}^{j-1} [c(u_{t+1}) - c(u_{t})] \\ [0.3cm]
 & \geq \displaystyle\sum_{t = i}^{j-1} [p - 1 - \mathcal{L}(u_{t}) - \mathcal{L}(u_{t+1}) - w + 1] \\ [0.3cm]
 & \geq \displaystyle\sum_{t = i}^{j-1} [p - 1 - D(u_{t}, u_{t+1})] \\ [0.3cm]
 & = (j-i)(p-1) - \displaystyle\sum_{t = i}^{j-1} D(u_{t}, u_{t+1}) \\ [0.3cm]
 & \geq (j-i)(p-1) - (j-i)(\frac{p}{2}) \\ [0.3cm]
 & = (j-i)\left(\frac{p-1}{2}\right) \\ [0.3cm]
 & = p-2
\end{array}
$$
Note that $D(u_{i},u_{i+1}) \geq 1$; it follows that $|c(u_{j})-c(u_{i})| + D(u_{i},u_{i+1}) \geq p-1$. Hence, $c$ is a hamiltonian coloring for $G$. The span of $c$ is given by
$$
\begin{array}{ll}
\mbox{span}(c) & = \displaystyle\sum_{t = 0}^{p-2} [c(u_{t+1}) - c(u_{t})] \\ [0.3cm]
 & = \displaystyle\sum_{t = 0}^{p-2} [p - 1 - \mathcal{L}(u_{t}) - \mathcal{L}(u_{t+1}) - \omega + 1] \\ [0.3cm]
 & = (p-1)^{2} - \displaystyle\sum_{t = 0}^{p-2} [\mathcal{L}(u_{t})+\mathcal{L}(u_{t+1})] - (p-1)(\omega-1) \\ [0.3cm]
 & = (p-1)(p-\omega) - 2 \displaystyle\sum_{u \in V(G)} \mathcal{L}(u) + \mathcal{L}(u_{0}) + \mathcal{L}(u_{p-1}) \\ [0.3cm]
 & = (p-1)(p-\omega) - 2 \mathcal{L}(G) + \xi \\ [0.3cm]
\end{array}
$$
Therefore, $hc(G) \leq (p - 1)(p-\omega) - 2 \mathcal{L}(G) + \xi$. This together with (\ref{eqn:lower}) implies (\ref{eqn:main}) and that $c$ is an optimal hamiltonian coloring.
\end{proof}

\section{Hamiltonian chromatic number of symmetric block graphs}

\indent\indent In this section, we continue to use the terminology and notation defined in previous section. We use Theorem \ref{hc:lower} and \ref{hc:main} to determine the hamiltonian chromatic number of symmetric block graphs.

A \emph{symmetric block graph}, denoted by $B_{n,k}$(or $B_{n,k}(d)$ if diameter is $d$), is a block graph with at least two blocks such that all blocks are cliques of size $n$, each cut vertex is exactly in $k$ blocks and the eccentricity of end vertices is same (see Figure \ref{Fig1}). It is straight forward to verify that the detour center of symmetric block graph of diameter $d$ is a vertex when $d$ is even and a block of size $n$ when $d$ is odd. Consequently, the number of detour central vertex/vertices for a symmetric block graph $B_{n,k}$ of diameter $d$ is either 1 or $n$ depending upon $d$ is even or odd. We observe that $B_{2,k}(2)$ are stars $K_{1,k}$, $B_{n,k}(2)$ are one point union of $k$ complete graphs (a one point union of $k$ complete graphs, also denoted by $K_{n}^{k}$, is a graph obtained by taking $v$ as a common vertex such that any two copies of $K_{n}$ are edge disjoint and do not have any vertex common except $v$), $B_{2,2}(d)$ are paths $P_{d+1}$ and $B_{2,k}(d)$ are symmetric trees (see \cite{Vaidya}). The hamiltonian chromatic number of stars $K_{1,k}$ is reported by Chartrand \emph{et al.} in \cite{Chartrand1}. The hamiltonian chromatic number of paths which is equal to the antipodal radio number of paths given by Khennoufa and Togni in \cite{Khennoufa} and the hamiltonian chromatic number of symmetric trees is investigated by Bantva in \cite{Bantva}. Hence we consider $k \geq 2$ and $d,n \geq 3$. However, for completeness we first give the hamiltonian chromatic number for $B_{n,k}(2)$ in Theorem \ref{one:thm} and next we consider general case.

\begin{theorem}\cite{Chartrand1}\label{star:thm} For $n \geq 3$, $hc(K_{1,n})$ = $(n-1)^{2}$.
\end{theorem}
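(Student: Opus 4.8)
The plan is to view $K_{1,n}$ as a block graph and read off the parameters $\omega$, $\xi$, $\mathcal{L}(G)$, so that Theorems \ref{hc:lower} and \ref{hc:main} deliver matching lower and upper bounds. Write $c_0$ for the center and $\ell_1,\dots,\ell_n$ for the leaves, so the order is $p=n+1$. Every block of $K_{1,n}$ is an edge $c_0\ell_i$, and since any $c_0$--$\ell_i$ path is forced to be the single edge while any $\ell_i$--$\ell_j$ path is forced to be $\ell_i c_0 \ell_j$, the detour distances are $D(c_0,\ell_i)=1$ and $D(\ell_i,\ell_j)=2$. Hence the detour eccentricity of $c_0$ is $1$ while that of each leaf is $2$, so the detour center is the single vertex $c_0$ and $\omega=1$.

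First I would compute the level function: $\mathcal{L}(c_0)=0$ and $\mathcal{L}(\ell_i)=1$ for every $i$, giving $\mathcal{L}(G)=n$. Since $\omega=1$ and each block attached to $c_0$ satisfies $|V(B_i)|-1=1$, we get $\xi=1$. Substituting $p=n+1$, $\omega=1$, $\mathcal{L}(G)=n$ and $\xi=1$ into (\ref{eqn:lower}) yields $hc(K_{1,n}) \geq n^2 - 2n + 1 = (n-1)^2$, which is the desired lower bound.

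For the matching upper bound I would exhibit the ordering $u_0=c_0$, $u_1=\ell_1,\dots,u_n=\ell_n$ and verify the three hypotheses of Theorem \ref{hc:main}. Condition (1) holds because $\mathcal{L}(u_0)=\mathcal{L}(c_0)=0$ and $\mathcal{L}(u_{p-1})=\mathcal{L}(\ell_n)=1=\xi$. Condition (2) holds because each leaf is its own branch at $c_0$, so any two consecutive leaves lie in different branches. Condition (3) requires $D(u_i,u_{i+1}) \leq p/2$; as the maximal detour distance is $2$, this reduces to $2 \leq (n+1)/2$, i.e. $n \geq 3$, exactly the standing hypothesis. The coloring (\ref{f0})--(\ref{f1}) then gives $c(\ell_1)=n-1$ and $c(\ell_{i+1})-c(\ell_i)=n-2$, so $\mathrm{span}(c)=(n-1)+(n-1)(n-2)=(n-1)^2$, and Theorem \ref{hc:main} certifies optimality.

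The only delicate point is condition (2) in this degenerate setting, where the branches are singletons and $c_0$ itself lies in the detour center rather than in any branch; I would justify the pair $(c_0,\ell_1)$ by checking directly that $D(c_0,\ell_1)=1=\mathcal{L}(c_0)+\mathcal{L}(\ell_1)+\omega-1$, since that equality in (\ref{eqn:duv}) is all condition (2) is used for. As a cross-check, the bound can be proved from scratch: the leaf colors must have pairwise gaps at least $n-2$ and the center must sit at distance at least $n-1$ from every leaf, and a short case analysis on whether the center lies below, above, or between the leaves shows the span is minimized, at $(n-1)^2$, precisely when the center is placed at an extreme.
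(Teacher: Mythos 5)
Your proof is correct, but note that the paper does not actually prove this statement at all: Theorem~\ref{star:thm} is quoted from Chartrand \emph{et al.}~\cite{Chartrand1} purely as background (the case $B_{2,k}(2)=K_{1,k}$ that the paper then excludes from Theorem~\ref{sym:thm}), so there is no in-paper argument to compare against. What you have done is show that the cited result is in fact a corollary of the paper's own machinery: the parameter computation $p=n+1$, $\omega=1$, $\xi=1$, $\mathcal{L}(K_{1,n})=n$ is right, Theorem~\ref{hc:lower} gives the lower bound $(n-1)^2$, and your ordering $c_0,\ell_1,\dots,\ell_n$ together with the coloring (\ref{f0})--(\ref{f1}) gives the matching upper bound, with $n\ge 3$ entering exactly where condition (3) of Theorem~\ref{hc:main} requires $2\le (n+1)/2$. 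You are also right to flag the degeneracy in condition (2) for the pair $(u_0,u_1)$ when $u_0$ is the central vertex (which is not contained in any branch); your fix --- verifying equality in (\ref{eqn:duv}) directly for that pair --- is exactly what is needed, and in fact the paper's own Algorithm~1 silently relies on the same convention by starting its ordering at the central vertex $w$. Compared with the original proof in \cite{Chartrand1}, which is the kind of direct gap-counting argument you sketch in your final cross-check, your derivation buys uniformity (the star is handled by the same two theorems as every other block graph) at the cost of having to patch the branch condition in a degenerate case; the direct argument is more elementary and self-contained. Either route is sound.
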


\begin{theorem}\cite{Khennoufa}\label{path:thm} For any $n \geq 5$,
\begin{eqnarray*}
hc(P_{n}) = ac(P_{n}) & = &\left\{
\begin{array}{ll}
2p^{2}-2p+2, & $if $ n = 2p+1, \\ [0.3cm]
2p^{2}-4p+4, & $if $ n = 2p.
\end{array}
\right.
\end{eqnarray*}
\end{theorem}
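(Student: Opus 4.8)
The plan is to exploit the fact that $P_n$ is a block graph whose blocks are all single edges, so that the machinery just developed applies almost directly. The decisive simplification is that there is a unique path between any two vertices of $P_n$, whence the detour distance coincides with the ordinary distance, $D(v_i,v_j)=|i-j|$, and the order satisfies $p-1=n-1=\diam(P_n)$. Under these identifications the hamiltonian condition $D(u,v)+|c(u)-c(v)|\ge n-1$ is exactly the radio antipodal condition $d(u,v)+|c(u)-c(v)|\ge\diam(P_n)$, which is the source of the equality $hc(P_n)=ac(P_n)$; I would begin by recording this reduction and fixing the two cases, with detour center a single vertex ($\omega=1$) when $n$ is odd and a central edge ($\omega=2$) when $n$ is even.

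For the lower bound I would order the vertices by color as $u_0,\dots,u_{n-1}$ and use $\mathrm{span}(c)\ge (n-1)^2-\sum_{i=0}^{n-2} D(u_i,u_{i+1})$, exactly as in the derivation of (\ref{eqn:sum}). Writing $x_i$ for the position of $u_i$ on the path, the sum is $\sum_i|x_{i}-x_{i+1}|$, whose maximum over all orderings is the elementary quantity $\lfloor n^2/2\rfloor-1$; plugging this in reproduces the bound of Theorem~\ref{hc:lower}. The essential point, however, is that this bound is \emph{not} tight for paths: any ordering realizing the maximal sum must contain consecutive pairs at distance greater than $n/2$, and assigning them the forced near-equal colors then violates the antipodal condition on some nearby pair. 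I would therefore sharpen the estimate by arguing that, within any valid coloring, the realizable value of $\sum_i D(u_i,u_{i+1})$ falls below $\lfloor n^2/2\rfloor-1$ by a parity-dependent constant, which upgrades the lower bound to the stated value.

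For the matching upper bound I would construct an explicit ``zig-zag'' coloring: list the vertices by alternately taking one from the left half $v_1,\dots,v_{\lfloor n/2\rfloor}$ and one from the right half, sweeping each half monotonically, and then color successive vertices by a rule of the form (\ref{f1}). This keeps consecutive vertices in opposite halves and at controlled distance, so the consecutive constraints hold with equality; the real verification is that every \emph{non-consecutive} pair $u_i,u_j$ also satisfies $D(u_i,u_j)+|c(u_i)-c(u_j)|\ge n-1$, for which a telescoping estimate like the one in the proof of Theorem~\ref{hc:main} applies once the condition $D\le n/2$ has been engineered into the ordering. Computing the span of this coloring should return precisely the refined lower bound.

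The main obstacle is the sharp lower bound together with the split into the two parities: the generic estimate of Theorem~\ref{hc:lower} leaves a genuine additive gap for paths, and closing it requires a dedicated counting argument showing exactly how far the consecutive-distance sum of a valid coloring must sit below its combinatorial maximum. A secondary but necessary chore is to keep the color-origin convention straight --- the labels here start at $0$, one less than in \cite{Khennoufa} --- so that the constants $2p^2-2p+2$ and $2p^2-4p+4$ are reported in a single consistent normalization.
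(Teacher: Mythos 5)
This statement is quoted in the paper from \cite{Khennoufa} without proof, so there is no internal argument to compare against; I can only assess your proposal on its own terms. The reduction you begin with is correct and worth stating: on a tree the detour distance is the ordinary distance, and since $\diam(P_n)=n-1=p-1$ the hamiltonian condition coincides with the radio antipodal condition, giving $hc(P_n)=ac(P_n)$. But both halves of your plan have genuine gaps. For the lower bound, the telescoping estimate with $\max\sum_i|x_i-x_{i+1}|=\lfloor n^2/2\rfloor-1$ yields only $2p^2-2p+1$ (odd case) and $2p^2-4p+2$ (even case), short of the stated values by $1$ and $2$ respectively. You correctly observe this, but the entire content of the theorem lives in the ``parity-dependent constant'' you promise to extract, and you give no actual argument for it --- that refined counting (showing that a valid antipodal coloring cannot realize the combinatorial maximum of the consecutive-distance sum) is the hard part of Khennoufa--Togni's proof, not a routine afterthought.

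The upper bound as described is worse than a gap: it cannot work. If the span is to equal $2p^2-4p+4$ for $n=2p$, the telescoping identity forces $\sum_i D(u_i,u_{i+1})\ge (n-1)^2-(2p^2-4p+4)=2p^2-3$, so the $2p-1$ consecutive jumps must average strictly more than $p=n/2$. An ordering ``engineered'' so that $D(u_i,u_{i+1})\le n/2$ for all $i$, as in condition (3) of Theorem~\ref{hc:main}, caps the sum at $(2p-1)p=2p^2-p$ and hence (with tight consecutive constraints) yields span at least $2p^2-3p+1$, which exceeds $2p^2-4p+4$ once $p>3$. So the machinery of Theorem~\ref{hc:main} does not apply to paths, and the optimal ordering must deliberately include jumps longer than $n/2$; verifying the non-consecutive constraints for such an ordering then requires a separate ad hoc check rather than the generic telescoping estimate. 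You would need to replace both the construction and its verification with the specific ordering of \cite{Khennoufa} (or an equivalent one) to complete the proof.
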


\begin{theorem}\cite{Bantva}\label{tree:thm} Let $T$ be a symmetric tree of order $p \geq 4$ and $\Delta(T) \geq 3$. Then
$$
hc(T) = (p-1)(p-1-\epsilon(T))+\epsilon^{'}(T)-2\mathcal{L}(T),
$$
where $\epsilon(T)$ = 0 when $C(T)$ = \{$w$\} and $\epsilon(T)$ = 1 when $C(T)$ = \{$w,w^{'}$\}; and $\epsilon^{'}(T)$ = $1-\epsilon(T)$
\end{theorem}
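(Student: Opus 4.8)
The plan is to obtain Theorem~\ref{tree:thm} as a direct specialization of Theorems~\ref{hc:lower} and~\ref{hc:main}, after observing that a symmetric tree $T$ is precisely a symmetric block graph in which every block is an edge, i.e.\ the case $n=2$. In any tree the unique $u$--$v$ path is also the longest one, so the detour distance $D(u,v)$ coincides with the ordinary distance, and consequently the detour level function $\mathcal{L}$, the total detour level $\mathcal{L}(G)$, the detour center, and the notion of branches all reduce to their familiar tree analogues. First I would fix the dictionary between the two parameter sets. When $\diam(T)=d$ is even the detour center is a single vertex, so $\omega=1$ and $C(T)=\{w\}$, giving $\epsilon(T)=0$; when $d$ is odd it is the central edge, a block of size $n=2$, so $\omega=2$ and $C(T)=\{w,w'\}$, giving $\epsilon(T)=1$. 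Hence $\omega=1+\epsilon(T)$. Moreover every block satisfies $|V(B_i)|-1=1$, so by definition $\xi=1$ when $\omega=1$ and $\xi=0$ when $\omega\ge 2$, i.e.\ $\xi=\epsilon'(T)$. Substituting $\omega=1+\epsilon(T)$, $\xi=\epsilon'(T)$ and $\mathcal{L}(G)=\mathcal{L}(T)$ turns the expression $(p-1)(p-\omega)-2\mathcal{L}(G)+\xi$ of Theorem~\ref{hc:main} exactly into $(p-1)(p-1-\epsilon(T))+\epsilon'(T)-2\mathcal{L}(T)$, the claimed value.

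With this dictionary the lower bound is immediate from Theorem~\ref{hc:lower}. The substance of the proof is then to produce an ordering $\{u_0,\dots,u_{p-1}\}$ of $V(T)$ meeting the three hypotheses of Theorem~\ref{hc:main}, which supplies the matching upper bound and optimality at once. I would build the ordering using the balancedness of $T$. When $\omega=1$ I set $u_0=w$ and distribute the remaining $p-1$ vertices, which split into $k$ branches of equal size $(p-1)/k$, in a round-robin manner so that no two consecutive vertices lie in the same branch (condition~(2)), terminating at a level-$1$ vertex so that $\mathcal{L}(u_{p-1})=1=\xi$ (condition~(1)). When $\omega=2$ the $p-2$ non-central vertices split evenly between the $w$-side and the $w'$-side of the central edge, so I would place the two central vertices at the ends ($u_0=w$, $u_{p-1}=w'$) and list the rest in strictly side-alternating order; each consecutive pair then lies on opposite sides, hence in opposite branches, and $\mathcal{L}(u_0)=\mathcal{L}(u_{p-1})=0$.

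The main obstacle is verifying conditions~(2) and~(3) for this construction. Condition~(2) is a feasibility question for the interleaving: for $\omega=1$ an arrangement with no two consecutive vertices in the same branch exists because each of the $k\ge 3$ branches is only a $1/k$ fraction of the $p-1$ vertices, and the level-$1$ endpoint can be accommodated; for $\omega=2$ it is a strict alternation of two equal-size sides, which is consistent because $p$ is even here. This is where $\Delta(T)\ge 3$ (equivalently $k\ge 3$) is essential, since it excludes the path case $k=2$ where the alternation degenerates. One small point still to check is that the endpoints, being the central vertices and hence not literally contained in any branch, nonetheless satisfy the governing inequality $D(u_i,u_{i+1})=\mathcal{L}(u_i)+\mathcal{L}(u_{i+1})+\omega-1$ with equality, which a direct computation from (\ref{eqn:duv}) and (\ref{f1}) confirms. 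Finally, condition~(3), $D(u_i,u_{i+1})\le p/2$, reduces to the single inequality $d\le p/2$, because the largest detour distance between vertices in different or opposite branches is the diameter $d$; for a balanced tree with $k\ge 3$ the order $p$ grows like $(k-1)^{\lfloor d/2\rfloor}$ while $d$ is only linear, so $d\le p/2$ holds once $T$ has at least two blocks, after a short check of the smallest cases. Establishing these combinatorial facts completes the application of Theorem~\ref{hc:main} and hence the equality.
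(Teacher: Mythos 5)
This statement is not proved in the paper at all: it is imported verbatim from \cite{Bantva} as background, so there is no in-paper proof to compare against. Your proposal supplies an independent derivation by specializing the paper's own Theorems~\ref{hc:lower} and~\ref{hc:main} to the case where every block is an edge, and in doing so you are following exactly the template the paper itself uses for symmetric block graphs in Theorem~\ref{sym:thm} (lower bound from Theorem~\ref{hc:lower}, then an explicit round-robin/alternating ordering verifying conditions (1)--(3) of Theorem~\ref{hc:main}); your construction is the $n=2$ analogue of Algorithms~1 and~2. The dictionary you set up is correct: $D=d$ on trees, $\omega=1+\epsilon(T)$, and $\xi=1=\epsilon'(T)$ when $\omega=1$ (every block attached to the center is an edge) versus $\xi=0=\epsilon'(T)$ when $\omega=2$, so $(p-1)(p-\omega)-2\mathcal{L}(G)+\xi$ becomes the claimed expression. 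You also correctly isolate the two points that carry the real content: feasibility of the branch-interleaving (which uses $\Delta(T)\ge 3$, i.e.\ $k\ge 3$ branches of equal size, and evenness of $p$ in the bicentral case), and the verification $D(u_i,u_{i+1})\le \diam(T)\le p/2$, which does hold for all symmetric trees with $p\ge 4$ and $\Delta\ge 3$ (the boundary case $K_{1,3}$ gives $d=2=p/2$). These last verifications are left as sketches rather than carried out in full, but the claims made are true and routine to check, so the outline is sound; what your route buys is that Theorem~\ref{tree:thm} need not be cited as external at all but follows from the paper's general machinery.
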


The next result gives the hamiltonian chromatic number for one point union of $k$ copies of complete graph $K_{n}$.

\begin{theorem}\label{one:thm} For $n, k \geq 2$,
\begin{eqnarray*}
hc(K_{n}^{k}) & = &\left\{
\begin{array}{ll}
(n-1)^{2}, & $if $ k = 2, \\ [0.3cm]
k(k-2)(n-1)^{2}+n-1, & $if $ k \geq 3.
\end{array}
\right.
\end{eqnarray*}
\end{theorem}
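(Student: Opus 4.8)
The plan is to recognize $K_{n}^{k}$ as the symmetric block graph $B_{n,k}(2)$, compute all the parameters that feed into Theorems \ref{hc:lower} and \ref{hc:main}, and then split according to whether the bound of Theorem \ref{hc:lower} is tight, which happens exactly when $k\geq 3$. First I would record the basic data. Since every block is a copy of $K_{n}$ and the $k$ blocks meet only at the common vertex $v$, the order is $p=k(n-1)+1$ and the diameter is $2$, so the detour center is the single vertex $v$ and $\omega=1$. Because a path leaving $v$ cannot return to $v$, the longest $v$--$u$ path for a non-central $u$ stays inside its own block; hence $\mathcal{L}(u)=D(v,u)=n-1$ for every $u\neq v$ and $\mathcal{L}(v)=0$, giving $\mathcal{L}(G)=k(n-1)^{2}$ and $\xi=n-1$. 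The same ``no return through $v$'' observation yields the detour distances used throughout: $D(x,y)=n-1$ whenever $x,y$ lie in a common block (in particular when one of them is $v$), and $D(x,y)=2(n-1)$ whenever $x,y$ lie in different blocks.

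For $k\geq 3$ the lower bound is immediate: substituting $p$, $\omega=1$, $\mathcal{L}(G)$ and $\xi$ into Theorem \ref{hc:lower} and simplifying gives $hc(K_{n}^{k})\geq k(k-2)(n-1)^{2}+(n-1)$. For the matching upper bound I would order the vertices by a round-robin through the blocks: set $u_{0}=v$ and then list one non-central vertex from $B_{1},B_{2},\ldots,B_{k},B_{1},\ldots$ in turn, so that any two consecutive vertices lie in different blocks (feasible since $k\geq 2$). Applying the coloring (\ref{f0})--(\ref{f1}) makes the first increment $(k-1)(n-1)$ and every later increment $(k-2)(n-1)$, so the span telescopes to exactly $k(k-2)(n-1)^{2}+(n-1)$. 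For $k\geq 4$ one checks $D(u_{i},u_{i+1})=2(n-1)\leq p/2$, so conditions (1)--(3) of Theorem \ref{hc:main} hold and optimality follows at once.

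The subtle point, and where I expect the main work, is $k=3$: then $2(n-1)>p/2$, so condition (3) of Theorem \ref{hc:main} fails and the theorem cannot be invoked. Here I would verify the hamiltonian inequality directly for the constructed $c$. Consecutive pairs lie in different blocks and satisfy the inequality with equality by construction; for a pair $u_{i},u_{j}$ with $j-i\geq 2$ the color gap is at least $(j-i)(k-2)(n-1)$, and the binding case is a same-block pair, where $D=n-1$ forces $(j-i)(k-2)(n-1)\geq (k-1)(n-1)$. Since $j-i\geq 2$, this reduces to $2(k-2)\geq k-1$, i.e.\ $k\geq 3$, which holds, while the cross-block pairs are looser. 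Thus $c$ is a valid hamiltonian coloring for every $k\geq 3$, establishing the formula in that range.

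Finally, $k=2$ must be treated separately, because $(k-2)=0$ makes the Theorem \ref{hc:lower} bound collapse to $n-1$, which is far from tight. The correct lower bound is forced locally inside a single block: the $n$ vertices of a $K_{n}$ are pairwise at detour distance $n-1$, so their colors must pairwise differ by at least $(p-1)-(n-1)=n-1$, whence the span is at least $(n-1)^{2}$. This is attained by coloring the $n$ vertices of each block with $0,n-1,2(n-1),\ldots,(n-1)^{2}$ and reusing the same palette in both blocks, which is legitimate because any two vertices in different blocks are at detour distance $2(n-1)=p-1$ and so impose no constraint. This gives $hc(K_{n}^{2})=(n-1)^{2}$ and completes both cases.
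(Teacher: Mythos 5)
Your proposal is correct, and it reaches the result by a somewhat different route than the paper. For the lower bound in the case $k\geq 3$ you simply compute $p=k(n-1)+1$, $\omega=1$, $\xi=n-1$, $\mathcal{L}(K_n^k)=k(n-1)^2$ and invoke Theorem \ref{hc:lower}, whereas the paper re-derives the bound from scratch by a case analysis on where the cut vertex $w$ sits in the ordering induced by a minimum coloring (first, last, or interior); your computation is cleaner and ties the result to the general framework of Section 2, while the paper's argument is self-contained. For the upper bound both you and the paper use essentially the same round-robin coloring (your ordering fed into (\ref{f0})--(\ref{f1}) reproduces the paper's explicit increments $(k-1)(n-1)$ and $(k-2)(n-1)$), but you add a point the paper glosses over: condition (3) of Theorem \ref{hc:main} fails for $k=3$, $n\geq 3$, since $2(n-1)>p/2$ there, so Theorem \ref{hc:main} cannot be cited and the hamiltonian inequality must be checked directly. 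Your direct check --- gap at least $(j-i)(k-2)(n-1)$ for $j-i\geq 2$ against the worst requirement $(k-1)(n-1)$ for same-block pairs, reducing to $2(k-2)\geq k-1$ --- is valid (indeed conservative, since in a $k$-block round robin same-block pairs satisfy $j-i\geq k$), and it supplies the verification the paper omits when it merely asserts ``since $c$ is a hamiltonian coloring.'' Your treatment of $k=2$ (local pairwise separation of $n-1$ within one block for the lower bound, palette reuse across blocks justified by $D=p-1$ for the upper bound) coincides with the paper's Case 1.
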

\begin{proof} Let $K_{n}^{k}$ be one point union of $k$ complete graph. To prove the result we consider following two cases.

\textbf{Case - 1:} $k$ = 2.~~Let $G$ = $K_{n}^{2}$ with vertex set \{$x_{1}$, $x_{2}$, ..., $x_{n-1}$, $y_{1}$, $y_{2}$, ..., $y_{n-1}$, $z$\}, where $x_{i}$ and $y_{i}$, $1 \leq i \leq n-1$ be the vertices of block on each side and $z$ is the common vertex of two blocks in $G$. Let $c$ be a minimum hamiltonian coloring of $G$ with 0 $\in c(V(G))$. Since $G$ contains hamiltonian path between $x_{i}$ and $y_{i}$ for $1 \leq i \leq n-1$, we can color $x_{i}$ and $y_{i}$ with same color. Since $D(z, x_{i})$ = $D(z, y_{i})$ = $D(x_{i}, x_{j})$ = $D(y_{i}, y_{j})$ = $n-1$ and $D(x_{i}, y_{j})$ = $2n-2$ = $p-1$, for $1 \leq i, j \leq n-1$ and $i \neq j$. It follows that $|c(z) - c(x_{j})| \geq n-1$ and $|c(x_{i}) - c(x_{j})| \geq n-1$. This implies that $hc(G)$ = $hc(c)$ $\geq$ $0 + (n-1)(n-1)$ = $(n-1)^{2}$.

Next we show that $hc(G)$ $\leq$ $(n-1)^{2}$. To prove this, it is enough to give hamiltonian coloring with span equal to $(n-1)^{2}$. Define a coloring $c$ of $G$ by

$c(z)$ = 0 \\
\indent$c(x_{i})$ = $c(y_{i})$ = $i(n-1)$, $1 \leq i \leq n-1$

Since $c$ is a hamiltonian coloring, $hc(G)$ $\leq$ $hc(c)$ = $c(x_{n-1})$ = $c(y_{n-1})$ = $(n-1)(n-1)$ = $(n-1)^{2}$ and hence $hc(G)$ = $hc(K_{n}^{(2)})$ = $(n-1)^{2}$.

\textbf{Case - 2:} $k \geq 3$.~~Let $G$ = $K_{n}^{k}$ with vertex set $\{v_{i}^{j}, w : 1 \leq i \leq n-1, 1 \leq j \leq k\}$ such that for each $j$ = $1, 2, ..., k$, $v_{i}^{j}$ where $1 \leq i \leq n-1$ are in same block and $w$ is the common vertex of $G$. Define a coloring $c$ of $G$ by

$c(w) = 0$ \\
\indent$c(v_{1}^{1})$ = $(k-1)(n-1)$ \\
\indent$c(v_{i+1}^{1})$ = $c(v_{i}^{1}) + k(n-1)$, $2 \leq i \leq n-1$ \\
\indent For $j$ = 1, 2, ..., $k-1$ \\
\indent$c(v_{i}^{j+1})$ = $c(v_{i}^{^{j}}) + (k-2)(n-1)$, $1 \leq i \leq n-1$.

Since $c$ is a hamiltonian coloring, $hc(G)$ $\leq$ $hc(c)$ = $(k-1)(n-1) + (k-1)(k-2)(n-1) + k(k-2)(n-1)(n-2)$ = $k(k-2)(n-1)^{2} + n - 1$.

Now we show that $hc(G)$ $\geq$ $k(k-2)(n-1)^{2} + n - 1$. Let $c$ be a minimum hamiltonian coloring of $G$. Since $G$ contains no hamiltonian path no two vertices can be colored the same. A hamiltonian coloring induces an ordering on $V(G)$ with increasing images. We may assume that 0 = $c(u_{0})$ $<$ $c(u_{1})$ $<$ ... $<$ $c(u_{p-1})$. We consider three subcases.

\textbf{Subcase - 1:} $c(w)$ = $c(u_{0})$ = 0.~~Since $D(u_{0}, u_{1})$ = $D(w, u_{1})$ = $n-1$, $c(u_{1}) \geq (k-1)(n-1)$. Also $D(v_{x}^{j}, v_{y}^{j})$ = $n-1$ for $1 \leq j \leq k$, $x \neq y$ and $D(v_{x}^{t}, v_{y}^{l})$ for $1 \leq x, y \leq n-1$, $1 \leq t, l (t \neq l) \leq k$. It follows that $c(u_{3})$ $\geq$ $(k-2)(n-1)$ and $c(u_{i+1})$ $\geq$ $c(u_{i}) + (k-2)(n-1)$ for all $3 \leq i \leq k(n-1)-1$. This implies that $c(u_{p-1})$ $\geq$ $(k-1)(n-1) + (k(n-1)-1)(k-2)(n-1)$ = $k(k-2)(n-1)^{2} + n - 1$. Therefore $hc(c)$ $\geq$ $k(k-2)(n-1)^{2} + n - 1$.

\textbf{Subcase - 2:} $c(w)$ = $c(u_{p-1})$ = $hc(c)$.~~Since $D(v_{x}^{j}, v_{y}^{j})$ = $n-1$ for $1 \leq j \leq k$, $x \neq y$ and $D(v_{x}^{t}, v_{y}^{l})$ for $1 \leq x, y \leq n-1$, $1 \leq t, l (t \neq l) \leq k$. For each $i$ with $1 \leq i \leq k(n-1)-1$, $c(v_{1})$ = 0 and $c(v_{i+1})$ = $(k-2)(n-1)$ and $c(w)$ = $c(u_{p-1})$ = $c(u_{k(n-1)-1}) + (k-1)(n-1)$. This implies that $c(u_{p-1})$ = $c(w)$ $\geq$ $(k(n-1)-1)(k-2)(n-1) + (k-1)(n-1)$ = $k(k-2)(n-1)^{2} + n - 1$. Therefore $hc(c)$ $\geq$ $k(k-2)(n-1)^{2} + n - 1$.

\textbf{Subcase - 3:} $c(u_{i})$ $\leq$ $c(w)$ $\leq$ $c(u_{i+1})$ for some $i$ with $1 \leq i \leq k(n-1)-1$.~~Since $D(v_{x}^{j}, v_{y}^{j})$ = $n-1$ for $1 \leq j \leq k$, $x \neq y$ and $D(v_{x}^{t}, v_{y}^{l})$ = $n-2$ for $1 \leq x, y \leq n-1$, $1 \leq t, l (t \neq l) \leq k$. Define $c(u_{0})$ = 0 and $c(u_{i+1})$ = $c(u_{i}) + (k-2)(n-1)$ for $1 \leq i \leq m$ and $m \leq p-3$. Then $c(u_{m+1})$ = $c(w)$ = $c(u_{m}) + (k-1)(n-1)$ and $c(u_{m+2})$ = $c(u_{m+1}) + (k-1)(n-1)$, $c(u_{i+1})$ = $c(u_{i}) + (k-2)(n-1)$ for $m+2 \leq i \leq p-1$. Therefore $hc(c)$ $\geq$ $k(k-2)(n-1)^{2} + 2(n-1)$.

Hence from Subcase - 1, 2 and 3, $hc(G)$ = $k(k-2)(n-1)^{2} + n - 1$.

Thus, from Case - 1 and 2, we have
\begin{eqnarray*}
hc(K_{n}^{k}) & = &\left\{
\begin{array}{ll}
(n-1)^{2}, & $if $ k = 2, \\ [0.3cm]
k(k-2)(n-1)^{2}+n-1, & $if $ k \geq 3.
\end{array}
\right.
\end{eqnarray*}
\end{proof}

We now determine the hamiltonian chromatics number for $B_{n,k}(d)$ for $k \geq 2$, $n,d \geq 3$ using Theorem \ref{hc:main}. Note that $B_{3,2}(3)$, $B_{3,3}(3)$ and $B_{3,2}(4)$ block graphs does not satisfies condition (c) of Theorem \ref{hc:main} but it is easy to verify that the hamiltonian chromatic numbers for these three graphs are coincide with the numbers produce by the formula stated in Theorem \ref{sym:thm}. Moreover, labels assigned by Algorithms given in proof of Theorem \ref{sym:thm} is the optimal hamiltonian coloring for these graphs.

\begin{theorem}\label{sym:thm} Let $k \geq 1$, $n \geq 2$, $d \geq 3$ be integers, $r$ = $\lfloor\frac{d}{2}\rfloor$ and $\Phi_{r}(x)$ = $1+x+x^{2}+...+x^{r-1}$. Then $hc\left(B_{n+1,k+1}(d)\right)$
\begin{eqnarray}\label{sym:eqn}
& = & \left\{
\small{
\begin{array}{ll}
n^{2}(k+1)\left[\Phi_{r}(kn)\left((k+1)\Phi_{r}(kn)-2r\right)
+\frac{2\left(\Phi_{r}(kn)-r\right)}{kn-1}\right]+n, & $if $ d $ is even$, \vspace{0.3 cm} \\
kn^{2}(n+1)\left[\Phi_{r}(kn)\left(k(n+1)\Phi_{r}(kn)-2r+1\right)
+\frac{2\left(\Phi_{r}(kn)-r\right)}{kn-1}\right], & $if $ d $ is odd$.
\end{array}}
\right.
\end{eqnarray}
\end{theorem}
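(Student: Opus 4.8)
The plan is to obtain (\ref{sym:eqn}) directly from Theorem \ref{hc:main}, so the argument breaks into three parts: reading off the structural data $p$, $\omega$, $\xi$, $\mathcal{L}(G)$ of $B_{n+1,k+1}(d)$; exhibiting an ordering of the vertices meeting hypotheses (1)--(3) of Theorem \ref{hc:main}; and simplifying $(p-1)(p-\omega)-2\mathcal{L}(G)+\xi$ to the stated closed form. Write $x=kn$ and $\Phi=\Phi_{r}(x)$. A vertex lying $\ell$ blocks from the detour centre has detour level $\ell n$. When $d$ is even the centre is a single vertex, so $\omega=1$ and $\xi=n$, and the number of vertices at block-level $\ell$ is $(k+1)n\,x^{\ell-1}$ for $1\le\ell\le r$; when $d$ is odd the centre is a block of size $n+1$, so $\omega=n+1$ and $\xi=0$, and the number at block-level $\ell$ is $(n+1)x^{\ell}$ for $1\le\ell\le r$ (with $n+1$ vertices at level $0$). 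Summing the corresponding geometric series gives $p=1+(k+1)n\Phi$ for $d$ even and $p=(n+1)(\Phi+x^{r})$ for $d$ odd, hence $p-\omega=(k+1)n\Phi$ (even) and $p-\omega=(n+1)x\Phi$ (odd); likewise $\mathcal{L}(G)=(k+1)n^{2}\sum_{\ell=1}^{r}\ell x^{\ell-1}$ (even) and $\mathcal{L}(G)=(n+1)nx\sum_{\ell=1}^{r}\ell x^{\ell-1}$ (odd).

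The crux is the second part, which is exactly what the two algorithms advertised in the abstract are for. By symmetry the vertices split into equal-sized classes indexed by the block at the centre to which they belong ($k+1$ classes when $d$ is even) or by the central vertex off which their branch hangs ($n+1$ classes when $d$ is odd). Because each class then contains at most half of the vertices to be sequenced (using $k+1\ge 2$ and $n+1\ge 2$), a round-robin linear order exists in which consecutive vertices always lie in distinct classes; this is precisely condition (2), since equality in (\ref{eqn:duv}) holds for different branches when $\omega=1$ and for opposite branches when $\omega\ge 2$. I would start the order at a central vertex, and finish at a level-$1$ vertex in the even case and at another central vertex in the odd case, which realizes condition (1), namely $\mathcal{L}(u_{0})=0$ and $\mathcal{L}(u_{p-1})=\xi$. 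For condition (3) I would note that the detour diameter of $B_{n+1,k+1}(d)$ equals $2rn+\omega-1$, so $D(u_{i},u_{i+1})\le\frac{p}{2}$ holds for every consecutive pair once $2rn+\omega-1\le\frac{p}{2}$; this inequality holds for all but the smallest parameter choices, and the finitely many exceptions (such as $B_{3,2}(3)$, $B_{3,3}(3)$, $B_{3,2}(4)$) I would treat by hand, giving an explicit optimal coloring and matching it against the lower bound of Theorem \ref{hc:lower}.

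Once the ordering is in place, Theorem \ref{hc:main} gives $hc(B_{n+1,k+1}(d))=(p-1)(p-\omega)-2\mathcal{L}(G)+\xi$, and the remaining work is algebraic. The single identity I would isolate is
$$\sum_{\ell=1}^{r}\ell x^{\ell-1}=r\Phi_{r}(x)-\frac{\Phi_{r}(x)-r}{x-1},$$
which follows by differentiating $\sum_{\ell=1}^{r}x^{\ell}=x\Phi_{r}(x)$ (equivalently from $\sum_{\ell=1}^{r}\ell x^{\ell-1}=\frac{rx^{r+1}-(r+1)x^{r}+1}{(x-1)^{2}}$). Substituting the data above, applying the identity, and collecting terms (factoring $(k+1)n^{2}$ in the even case and $kn^{2}(n+1)$ in the odd case), the expression collapses to the two branches of (\ref{sym:eqn}); a check at, say, $B_{3,3}(4)$, where both sides give $686$, confirms the bookkeeping.

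The main obstacle is the second part, and within it the simultaneous satisfaction of conditions (1) and (2): one must produce a branch-alternating linear order whose two endpoints are pinned to the prescribed detour levels. Existence is guaranteed by the class-balance count, but converting it into the two explicit, machine-verifiable algorithms, and certifying the small exceptional graphs separately, is where the genuine effort lies; everything following the application of Theorem \ref{hc:main} is routine manipulation of geometric sums.
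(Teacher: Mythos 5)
Your proposal follows essentially the same route as the paper: compute $p$, $\omega$, $\xi$ and $\mathcal{L}(G)$, obtain the lower bound from Theorem \ref{hc:lower}, realize it via a branch-alternating (round-robin) ordering satisfying conditions (1)--(3) of Theorem \ref{hc:main} --- which is exactly what the paper's Algorithms 1 and 2 implement explicitly --- and dispose of the same three small exceptions $B_{3,2}(3)$, $B_{3,3}(3)$, $B_{3,2}(4)$ by hand before the final geometric-sum algebra. The only differences are cosmetic: you argue existence of the ordering by a class-balance count rather than writing out the explicit indexing, and your odd-case count $p=(n+1)\left(\Phi_{r}(kn)+(kn)^{r}\right)$ is the correct value that the paper's displayed formula (\ref{sym:p}) is evidently intended to express.
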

\begin{proof} The order $p$ and total detour level of $B_{n+1,k+1}(d)$ is given by

\begin{eqnarray}\label{sym:p}
p = &\left\{
\begin{array}{ll}
1+\displaystyle\sum_{i = 1}^{r} (k+1)k^{i-1}n^{i}, & $if $ d $ is even$, \\
1+n+\displaystyle\sum_{i = 1}^{r}k^{i}n^{i+1}, & $if $ d $ is odd$.
\end{array}
\right.
\end{eqnarray}

\begin{eqnarray}\label{sym:l}
\mathcal{L}(G) & = &\left\{
\begin{array}{ll}
n^{2}(k+1)\left(r\Phi_{r}(kn)+\frac{r-\Phi_{r}(kn)}{kn-1}\right), & $if $ d $ is even$, \vspace{0.3 cm} \\
kn^{2}(n+1)\left(r\Phi_{r}(kn)+\frac{r-\Phi_{r}(kn)}{kn-1}\right), & $if $ d $ is odd$.
\end{array}
\right.
\end{eqnarray}
Substituting (\ref{sym:p}) and (\ref{sym:l}) into (\ref{eqn:lower}) gives the right hand side of (\ref{sym:eqn}).

We now prove that the right hand side of (\ref{sym:eqn}) is the actual value for the hamiltonian chromatics number of symmetric block graph. For this purpose we give a systematic hamiltonian coloring whose span is the right hand side of (\ref{sym:eqn}). We consider following two cases.

\textbf{Case - 1:} $d$ is even.~~In this case, symmetric block graphs has only one detour central vertex say $w$. We apply the following algorithm to find a hamiltonian coloring of symmetric block graph of even diameter whose span is right-hand side of (\ref{sym:eqn}).

\noindent\textbf{Algorithm 1:} An optimal hamiltonian coloring of symmetric block graphs $B_{n+1,k+1}(d)$, where $d$ is even. \\
\textbf{Input:} A symmetric block graph $B_{n+1,k+1}$ of even diameter. \\
\textbf{Idea:} Find an ordering of vertices of block graphs $B_{n+1,k+1}$ of even diameter which satisfies Theorem \ref{hc:main} and labeling defined by (\ref{f0})-(\ref{f1}) is a hamiltonian coloring whose span is right-hand side of (\ref{sym:eqn}). \\
\textbf{Initialization:} Start with a central vertex $w$. \\
\textbf{Iteration:} Define $c$ : $V(B_{n+1,k+1})$ $\rightarrow$ \{0,1,2,..\} as follows: \\
\textbf{Step-1:} Let $v^{1}$, $v^{2}$, ..., $v^{(k+1)n}$ be the vertices adjacent to $w$ such that any $k+1$ consecutive vertices in the list are in different blocks. \\
\textbf{Step-2:} Now $kn$ descendent vertices of each $v^{t}$, $t = 1,2,...,(k+1)n$ by $v_{0}^{t}$, $v_{1}^{t}$,...,$v_{kn-1}^{t}$ such that any $k$ consecutive vertices in the list are in different blocks. Next the $kn$ descendent vertices of each $v_{l}^{t}$, $0 \leq l \leq kn-1, 1 \leq t \leq (k+1)n$ by $v_{l0}^{t}$, $v_{l1}^{t}$,...,$v_{l(kn-1)}^{t}$ such that any $k$ consecutive vertices lies in different blocks; inductively $kn$ descendent vertices of $v_{i_{1},i_{2},...,i_{l}}^{t}$ ($0 \leq i_{1},i_{2},...,i_{l} \leq kn-1$, $1 \leq t \leq (k+1)n$) are indexed by $v_{i_{1},i_{2},...,i_{l},i_{l+1}}^{t}$ where $i_{l+1}$ = 0, 1, ..., $kn-1$ such that any $k$ consecutive vertices in the list are in different blocks. \\
\textbf{Step-3:} Rename $v_{i_{1},i_{2},...,i_{l},i_{l+1}}^{t}$ by $v_{j}^{t}$, ($1 \leq t \leq (k+1)n$), where \\

$j$ = $1 + i_{1} + i_{2}(kn) + ... + i_{l}(kn)^{l-1} + \displaystyle\sum_{l+1 \leq t \leq \frac{d}{2}} (kn)^{t}$. \\
\textbf{Step-4:} Give ordering \{$u_{0}$, $u_{1}$,...,$u_{p-1}$\} of vertices of symmetric block graphs as follows. \\
For $1 \leq j \leq p-(k+1)n-1$, let
\begin{eqnarray}
u_{j} := &\left\{
\begin{array}{ll}
v_{s}^{t}, \mbox{ where } s = \lceil \frac{j}{(k+1)n} \rceil, \mbox{ if } j \equiv t (\mbox{mod }(k+1)n), 1 \leq t \leq (k+1)n-1, \\ [0.3cm]
v_{s}^{(k+1)n}, \mbox{ where } s = \lceil \frac{j}{(k+1)n} \rceil, \mbox{ if } j \equiv 0 (\mbox{mod }(k+1)n)
\end{array} \non
\right.
\end{eqnarray}
For $p-(k+1)n \leq j \leq p-1$, let
$$
\begin{array}{l}
u_{j} := v^{j-p+(k+1)n+1}. \\
\end{array}
$$
Then above defined ordering \{$u_{0}$, $u_{1}$,...,$u_{p-1}$\} of vertices satisfies Theorem \ref{hc:main}. \\
\textbf{Step-5:} Define $c$ : $V(B_{n+1,k+1})$ $\rightarrow$ \{0,1,2,...\} by $c(u_{0})$ = 0 and $c(u_{i+1})$ = $c(u_{i}) + p - 1 - \mathcal{L}(u_{i}) - \mathcal{L}(u_{i+1}) - \omega + 1$, $0 \leq i \leq p-2$. \\
\textbf{Output:} The span of $c$ is span($c$) = $c(u_{p-1})$ = $c(u_{0}) + (p-1)^{2} - 2\left(\displaystyle\sum_{u \in V(G)} \mathcal{L}(u)\right) + n - 1$ = $(p-1)^{2} - 2 \mathcal{L}(B_{n+1,k+1}) + n - 1$ which is exactly the right-hand side of (\ref{sym:eqn}) by using (\ref{sym:p}) and (\ref{sym:l}) in the case of symmetric block graphs.

\textbf{Case - 2:} $d$ is odd.~~In this case, symmetric block graphs has $n+1$ central vertices say $v^{1}$, $v^{2}$, ..., $v^{n+1}$. We apply the following algorithm to find a hamiltonian coloring of symmetric block graph of odd diameter whose span is right-hand side (\ref{sym:eqn}).

\noindent\textbf{Algorithm 2:} An optimal hamiltonian coloring of symmetric block graphs $B_{n+1,k+1}(d)$, where $d$ is odd. \\
\textbf{Input:} A symmetric block graph $B_{n+1,k+1}(d)$ of odd diameter $d$. \\
\textbf{Idea:} Find an ordering of vertices of block graphs $B_{n+1,k+1}$ of odd diameter which satisfies Theorem \ref{hc:main} and labeling defined by (\ref{f0})-(\ref{f1}) is a hamiltonian coloring whose span is right-hand side of (\ref{sym:eqn}). \\
\textbf{Initialization:} Starts with central vertices $v^{1}$, $v^{2}$, ..., $v^{n+1}$. \\
\textbf{Iteration:} Define $c$ : $V(B_{n+1,k+1})$ $\rightarrow$ \{0,1,2,...\} \\
\textbf{Step-1:} Now $kn$ descendent vertices of each $v^{t}$, $t = 1,2,...,(k+1)n$ by $v_{0}^{t}$, $v_{1}^{t}$,...,$v_{kn-1}^{t}$ such that any $k$ consecutive vertices in the list are in different blocks. Next $kn$ descendent vertices of each $v_{l}^{t}$, $0 \leq l \leq kn-1, 1 \leq t \leq n+1$ by $v_{l0}^{t}$, $v_{l1}^{t}$,...,$v_{l(kn-1)}^{t}$ such that any $k$ consecutive vertices are in different blocks; inductively $kn$ descendent vertices of $v_{i_{1},i_{2},...,i_{l}}^{t}$ ($0 \leq i_{1},i_{2},...,i_{l} \leq kn-1$, $1 \leq t \leq n+1$) are indexed by $v_{i_{1},i_{2},...,i_{l},i_{l+1}}^{t}$ where $i_{l+1}$ = 0, 1, ..., $kn-1$ such that any $k$ consecutive vertices in the list are in different blocks. \\
\textbf{Step-2:} We rename $v_{i_{1},i_{2},...,i_{l},i_{l+1}}^{t}$ by $v_{j}^{t}$, ($1 \leq t \leq n+1$), where \\

$j$ = $1 + i_{1} + i_{2}(kn) + ... + i_{l}(kn)^{l-1} + \displaystyle\sum_{l+1 \leq t \leq \frac{d-1}{2}} (kn)^{t}$. \\
\textbf{Step-3:} Define an ordering \{$u_{0}$, $u_{1}$,...,$u_{p-1}$\} as follows: \\
For $1 \leq j \leq p-n-1$, let
\begin{eqnarray}
u_{j} := &\left\{
\begin{array}{ll}
v_{s}^{t}, \mbox{ where } s = \lceil j/(n+1) \rceil, \mbox{ if } j \equiv t (\mbox{mod }(n+1)) \mbox{ with } 1 \leq t \leq n, \\ [0.3cm]
v_{s}^{n+1}, \mbox{ where } s = \lceil j/(n+1) \rceil, \mbox{ if } j \equiv 0 (\mbox{mod }(n+1))
\end{array} \non
\right.
\end{eqnarray}
For $p-n \leq j \leq p-1$, \\
$$
\begin{array}{l}
u_{j} := v^{j-p+n+1}. \\
\end{array}
$$
Then above defined ordering \{$u_{0}$, $u_{1}$,...,$u_{p-1}$\} of vertices satisfies Theorem \ref{hc:main}.
\textbf{Step-4:} Define $c$ : $V(B_{n+1,k+1})$ $\rightarrow$ \{0,1,2,...\} by $c(u_{0})$ = 0 and $c(u_{i+1})$ = $c(u_{i}) + p - 1 - L(u_{i}) - L(u_{i+1}) - \omega + 1$, $0 \leq i \leq p-1$. \\
\textbf{Output:} The span of $c$ is $c(u_{p-1})$ = $hc(G)$ = $c(u_{0}) + (p-1)^{2} - 2\left(\displaystyle\sum_{u \in V(G)} \mathcal{L}(u)\right) - (p-1)(n-1)$ = $(p - 1)^{2}$ $-$ 2 $\mathcal{L}(G)$ $-$ $(p - 1)(n - 1)$ which is exactly the right-hand side of (\ref{sym:eqn}) by using (\ref{sym:p}) and (\ref{sym:l}) in the case of symmetric block graphs.

\noindent Thus, from Case - 1 and Case - 2, we obtain $hc\left(B_{n+1,k+1}(d)\right)$
\begin{eqnarray*}
& = & \left\{
\small{
\begin{array}{ll}
n^{2}(k+1)\left[\Phi_{r}(kn)\left((k+1)\Phi_{r}(kn)-2r\right)
+\frac{2\left(\Phi_{r}(kn)-r\right)}{kn-1}\right]+n, & $if $ d $ is even$, \vspace{0.3 cm} \\
kn^{2}(n+1)\left[\Phi_{r}(kn)\left(k(n+1)\Phi_{r}(kn)-2r+1\right)
+\frac{2\left(\Phi_{r}(kn)-r\right)}{kn-1}\right], & $if $ d $ is odd$.
\end{array}}
\right.
\end{eqnarray*}
\end{proof}

\begin{example} An optimal hamiltonian coloring of $B_{4,2}(4)$ using the procedure of Theorem \ref{sym:thm} is shown in Figure \ref{Fig1}-(a).
\end{example}
For $B_{4,2}(4)$, $k$ = 1, $n$ = 3, $d$ = 4, $r$ = $\lfloor\frac{d}{2}\rfloor$ = 2 and $\Phi_{\lfloor\frac{d}{2}\rfloor}(kn)$ = $\Phi_{2}(3)$ = 1 + 3 = 4. By Theorem \ref{sym:thm}, $hc(B_{4,2}(4))$ \\ = $n^{2}(k+1)\left[\Phi_{\lfloor\frac{d}{2}\rfloor}(kn) \left((k+1) \Phi_{\lfloor\frac{d}{2}\rfloor}(kn) - 2\lfloor\frac{d}{2}\rfloor\right)
+\frac{2\left(\Phi_{\lfloor\frac{d}{2}\rfloor}(kn)-\lfloor\frac{d}{2}\rfloor\right)}{kn-1}\right]+n$ \\ =  $3^{2}\cdot(1+1)\left[4((1+1)\cdot4-2\cdot2)+\frac{2(4-2)}{3-1}\right] + 3$ = 327.

\begin{example} An optimal hamiltonian coloring of $B_{4,2}(5)$ using the procedure of Theorem \ref{sym:thm} is shown Figure \ref{Fig1}-(b).
\end{example}
For $B_{4,2}(5)$, $k$ = 1, $n$ = 3, $d$ = 5, $r$ = $\lfloor\frac{d}{2}\rfloor$ = 2 and $\Phi_{\lfloor\frac{d}{2}\rfloor}(kn)$ = $\Phi_{2}(3)$ = 1 + 3 = 4. By Theorem \ref{sym:thm}, $hc(B_{4,2}(4))$ \\ =
$kn^{2}(n+1)\left[\Phi_{\lfloor\frac{d}{2}\rfloor}(kn)\left(k(n+1) \Phi_{\lfloor\frac{d}{2}\rfloor}(kn) - 2\lfloor\frac{d}{2}\rfloor + 1\right)  + \frac{2\left(\Phi_{\lfloor\frac{d}{2}\rfloor}(kn)-\lfloor\frac{d}{2}\rfloor\right)}{kn-1}\right]$ \\ =  $1\cdot3^{2}\cdot(3+1)\left[4(1\cdot(3+1)\cdot4-2\cdot2+1)+\frac{2(4-2)}{3-1}\right]$ = 1944.

\begin{figure}[h!]
\begin{center}
  \includegraphics[width=5.5 in]{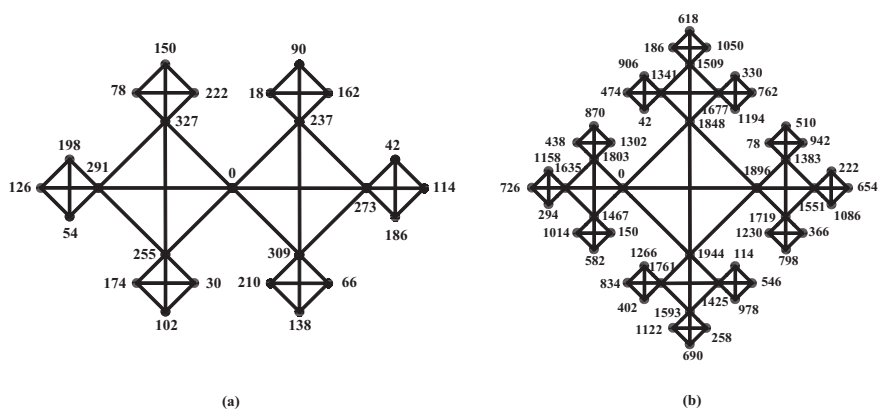}
  \caption{Optimal hamiltonian coloring of $B_{4,2}(4)$ and $B_{4,2}(5)$.}\label{Fig1}
\end{center}
\end{figure}

\end{document}